\documentclass[a4paper, 12pt]{article}
\pdfoutput=1

\usepackage[T1]{fontenc}
\usepackage[utf8]{inputenc}

\usepackage{amsfonts,amsmath,amssymb,mathtools,dsfont,microtype} 
\usepackage[dvipsnames]{xcolor} 
\usepackage[noBBpl]{mathpazo} 

\DeclareFontFamily{U} {cmr}{}

\DeclareFontShape{U}{cmr}{m}{n}{
	<-6> cmr5
	<6-7> cmr6
	<7-8> cmr7
	<8-9> cmr8
	<9-10> cmr9
	<10-12> cmr10
	<12-> cmr12}{}

\DeclareSymbolFont{Xcmr} {U} {cmr}{m}{n}
\DeclareMathSymbol{\Delta}{\mathord}{Xcmr}{'001}
\DeclareMathSymbol{\Upsilon}{\mathord}{Xcmr}{'007}
\DeclareMathSymbol{\Omega}{\mathord}{Xcmr}{'012}

\usepackage[labelsep = period, labelfont = bf, justification = centering]{caption}
\usepackage{float,graphicx,subcaption}
\DeclareCaptionSubType*[roman]{figure}

\usepackage{booktabs,makecell}
\usepackage{enumitem,mdwlist}
\setlist[itemize]{topsep=0ex,itemsep=0ex,parsep=0.4ex}
\setlist[enumerate]{topsep=0ex,itemsep=0ex,parsep=0.4ex}

\usepackage{pgf,tikz,ifthen,calc}
\usepackage{float, graphicx}
\usepackage{tkz-euclide}
\tkzSetUpPoint[fill=black, size = 3pt]
\tkzSetUpLine[color=black, line width=0.6pt]
\tkzSetUpCircle[color=black, line width=0.6pt]

\usepackage[left = 2.5cm, right = 2.5cm, top = 2.5cm, bottom = 2.5cm, headsep = 12pt, headheight = 15pt]{geometry}
\usepackage{parskip}

\usepackage[hyphens]{url} 
\usepackage[linktoc = all, hidelinks, colorlinks, unicode=true, pagebackref=true]{hyperref} 
\usepackage[capitalise, compress, nameinlink, noabbrev]{cleveref} 

\hypersetup{linkcolor={blue!70!black}, citecolor={green!70!black}, urlcolor={blue!70!black}}

\usepackage{amsthm,thmtools,thm-restate}
\declaretheorem[name = Theorem, numberwithin = section, style = plain]{thm}

\declaretheorem[name = Corollary, numberlike = thm, style = plain]{cor}
\declaretheorem[name = Conjecture, numberlike = thm, style = plain]{conj}

\declaretheorem[name = Lemma, numberlike = thm, style = plain]{lem}
\declaretheorem[name = Observation, numberlike = thm, style = plain]{obs}

\crefname{defi}{Definition}{Definitions}
\crefname{thm}{Theorem}{Theorems}
\crefname{lem}{Lemma}{Lemmas}
\crefname{conj}{Conjecture}{Conjectures}
\crefname{claim}{Claim}{Claims}
\crefname{cor}{Corollary}{Corollaries}
\crefname{obs}{Observation}{Observations}
\crefname{prop}{Proposition}{Propositions}
\crefname{que}{Question}{Questions}
\crefname{rem}{Remark}{Remarks}
\crefname{subsection}{\S}{\S\S}

\DeclareFontFamily{U}{matha}{\hyphenchar\font45}
\DeclareFontShape{U}{matha}{m}{n}{
	<5> <6> <7> <8> <9> <10> gen * matha
	<10.95> matha10 <12> <14.4> <17.28> <20.74> <24.88> matha12
}{}
\DeclareSymbolFont{matha}{U}{matha}{m}{n}

\DeclareMathSymbol{\specialuparrow}{\mathrel}{matha}{"D2}
\DeclareMathSymbol{\specialrightarrow}{\mathrel}{matha}{"D1}

\renewcommand*{\backref}[1]{}
\renewcommand*{\backrefalt}[4]{
	\ifcase #1 Not cited.%
	\or $\specialuparrow$#2%
	\else $\specialuparrow$#2%
	\fi%
}

\renewcommand{\epsilon}{\varepsilon}
\renewcommand{\ge}{\geqslant}
\renewcommand{\le}{\leqslant}

\renewcommand{\emptyset}{\varnothing}

\title{A constant-factor step towards Vizing's conjecture}
\author{Raphael Steiner\footnotemark[1]}
\date{\today}

\begin{document}

\maketitle

\renewcommand{\thefootnote}{\fnsymbol{footnote}} 

\footnotetext[1]{Department of Mathematics, ETH Z\"{u}rich, Switzerland (\textsf{\href{mailto:raphaelmario.steiner@math.ethz.ch}{raphaelmario.steiner@math.ethz.ch}}). Research supported by the SNSF Ambizione Grant No. 216071.}

\renewcommand{\thefootnote}{\arabic{footnote}} 

\begin{abstract}
\noindent Vizing's conjecture from 1963, considered by many the most important open problem in the field of graph domination, states that all graphs $G$ and $H$ satisfy $$\gamma(G\square H)\ge \gamma(G)\gamma(H),$$ where $\gamma$ denotes the domination number and $\square$ the Cartesian product. 

In a seminal result, Clark and Suen (2000) proved an approximate form of the conjecture, namely that $\gamma(G\square H)\ge \frac{1}{2}\gamma(G)\gamma(H)$ for all graphs $G$ and $H$. Despite several lower-order improvements of this bound and improvements for special classes of graphs $G$ and $H$, no absolute constant $c>\frac{1}{2}$ such that $\gamma(G\square H)\ge c\gamma(G)\gamma(H)$ for all graphs $G$ and $H$, has been known thus far.

\noindent In this paper, we obtain the first constant-factor improvement of the Clark-Suen bound by proving that for all graphs $G$ and $H$, we have
$$\gamma(G\square H)\ge c\gamma(G)\gamma(H),$$
where
$$c=\frac{5+\sqrt{73}}{24}\approx 0.5643.$$
Along the way, we prove another lower bound on $\gamma(G\square H)$ which outperforms the above bound for many graphs.
\end{abstract}

\section{Introduction}
The \emph{domination number} $\gamma(G)$ of a graph $G$ is amongst the most fundamental graph parameters and has been studied since the beginnings of graph theory. It is defined as the minimum size of a subset of vertices of $G$ such that every vertex is either contained in the set or has a neighbor in it. The theory of graph domination has grown into a rich and flourishing branch of modern graph theory, yet several fundamental problems remain. Vizing's conjecture, dating back to the early 60s, is perhaps the most famous among those open problems, and claims that the domination number is super-multiplicative with respect to the Cartesian product\footnote{Recall that given two graphs $G$ and $H$, their \emph{Cartesian product} $G\square H$ is defined to be the graph with vertex set $V(G)\times V(H)$, where two distinct vertices $(g,h)$ and $(g',h')$ are adjacent if and only if either $gg'\in E(G)$ and $h=h'$, or $g=g'$ and $hh'\in E(H)$.}.

\begin{conj}[Vizing 1963~\cite{MR209178}, see also~\cite{MR240000}]\label{con:vizing}
For all graphs $G$ and $H$, we have
$$\gamma(G\square H)\ge \gamma(G)\gamma(H).$$
\end{conj}
Vizing's conjecture has attracted a tremendous amount of attention from graph theory researchers in the past, and it would be impossible to even come close to a full account of known partial results on the conjecture and its variants here. Instead, we will refer to the textbooks and survey articles~\cite{MR4607811,MR1605684,MR2864622} for a detailed coverage of the history of progress on the conjecture, and focus here only on the directly relevant previous work.

The probably most outstanding partial result towards Vizing's conjecture that applies to \emph{all} graphs $G$ and $H$ rather than such in special classes, is due to Clark and Suen from 2000~\cite{MR1763970}, proving that 
$$\gamma(G\square H)\ge \frac{1}{2}\gamma(G)\gamma(H)$$ for all graphs $G$ and $H$, that is, the conjecture holds up to a multiplicative factor of $2$. Clark and Suen's result was subsequently improved further, first by Suen and Tarr in 2012~\cite{MR2880639} to 
$$\gamma(G\square H)\ge \frac{1}{2}\gamma(G)\gamma(H)+\frac{1}{2}\min\{\gamma(G),\gamma(H)\}$$ and then further by Zerbib in 2019~\cite{MR4035865} to 
$$\gamma(G\square H)\ge \frac{1}{2}\gamma(G)\gamma(H)+\frac{1}{2}\max\{\gamma(G),\gamma(H)\}.$$
While these improvements are significant when one of $\gamma(G),\gamma(H)$ is small, asymptotically, that is, when both $\gamma(G)$ and $\gamma(H)$ grow large, they do not improve the Clark-Suen bound by a constant factor. Such a constant-factor improvement, namely a proof that for some absolute constant $c>\frac{1}{2}$, every two graphs $G$ and $H$ satisfy $$\gamma(G\square H)\ge c\gamma(G)\gamma(H),$$ has remained open thus far and has been highlighted in several previous works as an open problem. A noteworthy recent advance in this direction (but no full solution) is due to Bre\v{s}ar~\cite{bresar}, who proved the following result. Here, $\rho(G)$ denotes the so-called \emph{$2$-packing number} of the graph $G$, defined as the maximum size of a subset $P$ of vertices such that the closed neighborhoods of any two vertices in $P$ are disjoint (such a set is called a \emph{$2$-packing}). Observe that every graph $G$ satisfies $\rho(G)\le \gamma(G)$, however in general $\rho(G)$ can be much smaller than $\gamma(G)$.
\begin{thm}[Bre\v{s}ar, cf.~Theorem~3 in~\cite{bresar}]\label{thm:bresar}
For all graphs $G$ and $H$, we have
$$\gamma(G\square H)\ge \max\left\{\frac{2\gamma(G)-\rho(G)}{3}\gamma(H),\frac{2\gamma(H)-\rho(H)}{3}\gamma(G)\right\}.$$
\end{thm}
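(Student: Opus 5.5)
We would follow the double-counting scheme of Clark and Suen and sharpen the row-wise estimate; the term $\rho(G)$ is what the sharpening costs. By symmetry it suffices to show $3|D|\ge (2\gamma(G)-\rho(G))\gamma(H)$ for every dominating set $D$ of $G\square H$. Let $k=\gamma(G)$ and fix a minimum dominating set $S=\{u_1,\dots,u_k\}$ of $G$. Partition $V(G)$ into cells $\Pi_1,\dots,\Pi_k$ with $u_i\in\Pi_i\subseteq N[u_i]$. For $h\in V(H)$ write $A_h=\{g:(g,h)\in D\}$. Say the cell $\Pi_i\times\{h\}$ is \emph{horizontally dominated} if every vertex of it has a $D$-neighbour or $D$-vertex inside the fibre $G\times\{h\}$. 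Otherwise it is \emph{vertically dominated}.

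\textbf{Column count.} Fix a column $\Pi_i\times V(H)$. Project $D\cap(\Pi_i\times V(H))$ to $H$, and add all rows $h$ in which $\Pi_i\times\{h\}$ is horizontally dominated. The result dominates $H$: if the cell $\Pi_i\times\{h\}$ is not horizontally dominated, some vertex $(g,h)$ of it is dominated from $(g,h')$ with $h'h\in E(H)$. Summing over $i$ gives
\[|D|+\sum_{h}(k-\ell_h)\ \ge\ k\gamma(H),\]
where $\ell_h$ is the number of cells in row $h$ that are not horizontally dominated.

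\textbf{Row bound.} Clark and Suen bound $k-\ell_h\le |A_h|$ and so obtain the factor $\tfrac12$. To reach the factor $\tfrac23$ we need the sharper row bound
\[k-\ell_h\ \le\ \tfrac12\big(|A_h|+\ldots\big),\]
more precisely $2(k-\ell_h)\le |A_h|+\rho(G)+\ell_h$, or a comparable inequality whose sum over rows, combined with the column count, yields $3|D|\ge(2k-\rho(G))\gamma(H)$.

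We argue within a fixed row. $A_h$ together with the centres $u_i$ of the $\ell_h$ non-horizontally-dominated cells dominates $G$, so
\[|A_h|+\ell_h\ge k.\]
Now split the horizontally dominated cells into two types: those met by $A_h$, and those dominated entirely from outside. If the second type is numerous, each external vertex $g\in A_h$ must serve cells in several different $N[u_j]$. Replacing those $u_j$ by $g$ then gives a dominating set of $G$ of size less than $k$ unless the relevant vertices are spread out. In that case the cells dominated singly and externally should carry centres forming a $2$-packing, so there are at most $\rho(G)$ of them.

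\textbf{Averaging.} Adding the improved row inequality to the column inequality with suitable weights, namely $1$ and $2$, chosen so that the $\ell_h$ terms cancel, should give exactly $\frac{2\gamma(G)-\rho(G)}{3}\gamma(H)$.

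\textbf{Main obstacle.} The hard step is the row bound, that is, making the exchange argument rigorous. One must show that external domination of many cells by few vertices of $A_h$ either shrinks $S$, contradicting minimality, or forces a $2$-packing. To handle this we would first choose $S$ extremally, for example maximising the number of $u_i$ that have external private neighbours, or minimising the number of edges inside $S$. This gives every cell a private neighbour to exploit, and makes the exchange step local.
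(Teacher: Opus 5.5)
There is a genuine gap, and it is exactly the step you flag: the row bound. As stated, $2(k-\ell_h)\le |A_h|+\rho(G)+\ell_h$ is false. Take $H=K_1$ and $G=C_4$, so $\gamma(G)=2$ and $\rho(G)=1$, since any two closed neighbourhoods in $C_4$ meet. Let $D$ be a minimum dominating set. The only row has $|A_h|=2$, and since $A_h$ dominates $G$ we get $\ell_h=0$. The inequality then reads $4\le 3$.

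The same failure occurs in any row where $A_h$ is a minimum dominating set of $G$ and $\rho(G)<\gamma(G)$. So no inequality of the form $2(k-\ell_h)\le |A_h|+\rho(G)+c\,\ell_h$ can hold in general.

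The deeper problem is the indexing. Your rows run over all of $V(H)$, so any additive per-row correction $\rho(G)$ sums to $\rho(G)|V(H)|$, not $\rho(G)\gamma(H)$. No weighted combination with the column count can then produce the target. The extremal choice of $S$ and the exchange argument address neither issue. Your column count and the Clark--Suen row bound $|A_h|+\ell_h\ge k$ are both correct, so the setup itself is sound.

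The missing idea is to sharpen the inequality that is summed only $\gamma(\cdot)$ times, using the matching bound of Lemma~\ref{lem:obs}. Build the graph on $S\subseteq V(G)$ joining vertices whose closed neighbourhoods intersect, and take a maximum matching. Each matched pair is dominated by one common neighbour, and the unmatched vertices form a $2$-packing, so $\gamma_G(S)\le\frac{|S|+\rho(G)}{2}$.

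The argument the paper attributes to Bre\v{s}ar is the proof of Theorem~\ref{thm:Vizing2packing} with this bound in place of Lemma~\ref{lem:cor}. It partitions $V(H)$ into $\gamma(H)$ parts $\pi_i\subseteq N_H[h_i]$. For each $i$ it bounds $\gamma(G)\le |D_i|+\gamma_G(L_i)\le |D_i|+\frac{|L_i|+\rho(G)}{2}$, and it uses $\sum_i|L_i|\le|D|$.

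In your orientation this means leaving the row bound $k-\ell_h\le|A_h|$ untouched and improving the column count instead. Let $R_i$ be the set of rows $h$ in which $\Pi_i\times\{h\}$ is horizontally dominated. Then
\[
\gamma(H)\le |D\cap(\Pi_i\times V(H))|+\gamma_H(R_i)\le |D\cap(\Pi_i\times V(H))|+\frac{|R_i|+\rho(H)}{2}.
\]
Sum this over the $\gamma(G)$ columns and use $\sum_i|R_i|=\sum_h(k-\ell_h)\le|D|$. This gives $|D|\ge\frac{2\gamma(H)-\rho(H)}{3}\gamma(G)$, and the other term of the maximum follows by symmetry.
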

In particular, this result implies that one can improve the Clark-Suen bound by a constant factor in the special case when the packing number $\rho(G)$ is significantly smaller than half of the domination number $\gamma(G)$.

As the main result of this paper, we provide the first constant-factor improvement of the Clark-Suen bound from 2000 that is valid for all graphs $G$ and $H$, without any additional assumptions.
\begin{thm}\label{thm:main}
For all graphs $G$ and $H$, we have
$$\gamma(G\square H)\ge c\gamma(G)\gamma(H),$$ where $c:=\frac{5+\sqrt{73}}{24}\approx 0.5643$.
\end{thm}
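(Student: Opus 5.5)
We plan to combine Bre\v{s}ar's bound (\cref{thm:bresar}) with a new lower bound that is strong exactly when $\rho(G)$ is close to $\gamma(G)$. The constant $c=\frac{5+\sqrt{73}}{24}$ is a root of $12c^2-5c-1=0$, which suggests balancing two bounds that depend on a single parameter. Write $\gamma=\gamma(G)$, $\rho=\rho(G)$ and $t=\rho/\gamma\in(0,1]$. \Cref{thm:bresar} gives $\gamma(G\square H)\ge\frac{2-t}{3}\gamma\gamma(H)$, and this exceeds $c\gamma\gamma(H)$ whenever $t\le 2-3c$. It therefore remains to treat the case where $G$ contains a large $2$-packing $P$ with $|P|>(2-3c)\gamma$.

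For this regime we would prove a second bound, roughly of the form $\gamma(G\square H)\ge f(t)\gamma(G)\gamma(H)$ with $f$ increasing in $t$, by refining the Clark--Suen double counting. Fix a minimum dominating set $D$ of $G\square H$. Use the packing $P$: the closed neighbourhoods $N[p]$, $p\in P$, are disjoint, and each $H$-fiber over $N[p]$ must be dominated. The standard Clark--Suen argument partitions $V(G)$ into $\gamma$ cells around a minimum dominating set of $G$ and counts, for each $h\in V(H)$, the cells that are ``vertically'' dominated versus those only horizontally dominated. This loses a factor $2$ because a single vertex of $D$ may be charged twice. Around a packing vertex $p$, the cell $N[p]$ is a closed neighbourhood. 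So if $\{p\}\times V(H)$ is dominated, the vertices of $D$ in $N[p]\times V(H)$ project to a set dominating $H$, up to the vertices dominated from outside the cell. Because $p$ has no neighbours outside $N[p]$, horizontal domination of $(p,h)$ must come from inside $N[p]\times\{h\}$. This removes the double charging on the packing cells. Concretely, we aim to show $|D\cap(N[p]\times V(H))|\ge\gamma(H)$ for each $p\in P$, and then run Clark--Suen on the remaining $\gamma-\rho$ cells (choosing a dominating set of $G$ that extends $P$ suitably, or one of size at most $\gamma$ containing structure around $P$) to get an additional $\frac12(\gamma-\rho)\gamma(H)$-type contribution. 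Combined with a mixed counting, this should give a bound like $\gamma(G\square H)\ge(\rho+\frac12(\gamma-\rho)+\text{correction})\gamma(H)$.

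The main obstacle is that this naive split does not combine additively. The rows used to dominate the packing fibers may also be needed for the remaining cells, and a dominating set of $G$ built around $P$ may have size more than $\gamma$. We would first try a weighted double count: assign each vertex of $D$ a charge split between packing cells and ordinary cells, with weights $\alpha,1-\alpha$. Then optimize $\alpha$ to obtain a second bound that is quadratic-rational in $t$, for instance $\gamma(G\square H)\ge\frac{1+t}{2+ \cdots}\gamma\gamma(H)$. The final step is the calculus: take the maximum of this bound and $\frac{2-t}{3}$, minimize over $t\in[0,1]$, and check that the crossover point yields exactly the root of $12c^2-5c-1=0$. This matching is also a sanity check on the precise form of the second bound we need. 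We would reverse-engineer the needed inequality from that equation before fixing the weights.
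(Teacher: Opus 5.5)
\textbf{There is a genuine gap.} The second bound, which carries all the content of the theorem, is never actually obtained. Your one concrete step, $|D\cap(N[p]\times V(H))|\ge\gamma(H)$ for $p\in P$, is true, since the closed neighbourhood of every $(p,h)$ lies in $N[p]\times V(H)$. But it only gives $\gamma(G\square H)\ge\rho(G)\gamma(H)$, i.e.\ $f(t)=t$. Combined with Theorem~\ref{thm:bresar}, this leaves the whole window $2-3c<t<c$ (roughly $0.307<t<0.564$) uncovered.

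The extra ``$\frac12(\gamma-\rho)\gamma(H)$-type contribution'' cannot simply be assumed. It would give $f(t)=\frac{1+t}{2}$, which balanced against $\frac{2-t}{3}$ at $t=\frac15$ yields the constant $\frac35$. That is stronger than the theorem you are proving.

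The known rigorous form of your packing-plus-remainder split is the Hou--Lu bound (Theorem~\ref{thm:houlu}): $\gamma(G\square H)\ge\rho(G)\gamma(H)+\gamma^{\{m\}}(H)$ with $m=\gamma(G)-\rho(G)$. Here $\gamma^{\{m\}}(H)/m$ tends to the fractional domination number of $H$ as $m$ grows. For random graphs that number is $O(1)$ while $\gamma(H)$ grows logarithmically. So for such $H$ the remainder is negligible and the split gives essentially only $f(t)=t$. A weight $\alpha$ chosen later by reverse-engineering $12c^2-5c-1=0$ does not address this, and as stated it is a plan rather than an argument.

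\textbf{What is missing} is a complementary bound that is strong exactly for those bad $H$. The paper detects them via the $2$-packing function number $\rho^{\{2\}}(H)$.

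First, it proves Lemma~\ref{lem:cor}, $\gamma_G(S)\le\frac{|S|+\rho^{\{2\}}(G)}{3}$. The proof is by induction, removing a closed neighbourhood that contains three vertices of $S$. When no such neighbourhood exists, a K\"onig-theorem argument gives $\gamma_G(S)\le\rho^{\{2\}}(G)-\rho_G(S)$, and this is averaged with $\gamma_G(S)\le\frac{|S|+\rho_G(S)}{2}$. Substituting the lemma into Bre\v{s}ar's cell counting yields Theorem~\ref{thm:Vizing2packing}, applied here in the form $\gamma(G\square H)\ge\frac{3\gamma(H)-\rho^{\{2\}}(H)}{4}\gamma(G)$.

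Second, the double count $\gamma^{\{m\}}(H)\ge\frac m2\rho^{\{2\}}(H)$ (Observation~\ref{obs:obvious}) makes Hou--Lu strong when $\rho^{\{2\}}(H)$ is large.

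With $x_1=\rho(G)/\gamma(G)$ and $y_2=\rho^{\{2\}}(H)/(2\gamma(H))$, the argument splits into three cases:
\begin{itemize}
\item if $x_1<2-3c$, use Bre\v{s}ar's bound;
\item if $y_2<\frac32-2c$, use Theorem~\ref{thm:Vizing2packing};
\item otherwise, by monotonicity, $x_1+y_2(1-x_1)\ge(2-3c)+(\frac32-2c)(3c-1)=c$, and the last equality is exactly $12c^2-5c-1=0$.
\end{itemize}

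Eliminating $y_2$ between the last two bounds produces exactly the kind of one-parameter bound you were after: $\gamma(G\square H)\ge\frac{3-t}{2(3-2t)}\gamma(G)\gamma(H)$ for $t\le\frac34$. Its crossover with $\frac{2-t}{3}$ is at $t=\frac{11-\sqrt{73}}{8}$. So your reading of the final calculus is right, but that bound is reached only through the auxiliary parameter $\rho^{\{2\}}(H)$, and the new Lemma~\ref{lem:cor} behind it.
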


A key new ingredient of our proof of Theorem~\ref{thm:main} is a lower bound on the domination number of $G\square H$ which is of a similar type as that of Bre\v{s}ar stated in Theorem~\ref{thm:bresar}, but quantitatively better in certain (for us relevant) ranges of the parameters. To state this bound, we first need to introduce the following important definitions (cf.~\cite{MR4180634}): Given $k\in \mathbb{Z}_{\ge 0}$ and a graph $G$, a non-negative integer weight-function $f:V(G)\rightarrow \mathbb{Z}_{\ge 0}$ is called a \emph{$k$-packing ($k$-dominating) function} if the total weight of every closed neighborhood in $G$ is at most (at least) $k$. We denote by $\rho^{\{k\}}(G)$ (and $\gamma^{\{k\}}(G)$, respectively), the maximum (minimum) possible total weight of a $k$-packing ($k$-dominating) weight function on $G$. It is easy to see from these definitions that $\rho^{\{1\}}(G)=\rho(G)$ and $\gamma^{\{1\}}(G)=\gamma(G)$. It is also immediate from the definitions that $\rho^{\{k\}}(G)\ge k\rho(G)$ and $\gamma^{\{k\}}(G)\le k\gamma(G)$ for every $k\in \mathbb{Z}_{\ge 0}$.

With these important notions at hand, we can now state our key new lower bound on $\gamma(G\square H)$.
\begin{thm}\label{thm:Vizing2packing}
For all graphs $G$ and $H$, we have
$$\gamma(G\square H)\ge \max\left\{\frac{3\gamma(G)-\rho^{\{2\}}(G)}{4}\gamma(H),\frac{3\gamma(H)-\rho^{\{2\}}(H)}{4}\gamma(G)\right\}.$$
\end{thm}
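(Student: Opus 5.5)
The plan is to refine the Clark–Suen double-counting argument. The new ingredient is a sharper row-wise inequality, and $\rho^{\{2\}}(G)$ enters through that inequality. By symmetry it suffices to prove $4\gamma(G\square H)\ge (3\gamma(G)-\rho^{\{2\}}(G))\gamma(H)$.

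\emph{Setup.} Let $k=\gamma(G)$ and fix a minimum dominating set $\{u_1,\dots,u_k\}$ of $G$. Partition $V(G)$ into cells $\Pi_1,\dots,\Pi_k$ with $u_i\in\Pi_i\subseteq N_G[u_i]$. Similarly take a minimum dominating set $\{v_1,\dots,v_m\}$ of $H$, with $m=\gamma(H)$, and a partition $Q_1,\dots,Q_m$ of $V(H)$ with $v_j\in Q_j\subseteq N_H[v_j]$. Let $D$ be a minimum dominating set of $G\square H$. Call a pair $(i,h)$ \emph{vertically dominated} if $D\cap(\Pi_i\times N_H[h])\neq\emptyset$, and \emph{horizontal} otherwise. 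Let $\ell_h$ be the number of horizontal pairs in the row $G\times\{h\}$.

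\emph{Column inequality.} As in Clark–Suen, for each $i$ the $H$-projection of $D\cap(\Pi_i\times V(H))$, together with one vertex for each horizontal $h$, dominates $H$. Summing over $i$ gives
\[|D|+\sum_h \ell_h\ge k\,m.\]

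\emph{Row-block inequality.} This is the new step. Group the rows according to the blocks $Q_j$. For block $j$, let $S_j$ be the $G$-projection of $D\cap(V(G)\times Q_j)$, and let $I_j$ be the set of indices $i$ that are horizontal for the row $v_j$. Every vertex of $\Pi_i\times\{v_j\}$ with $i\in I_j$ must be dominated horizontally, so $S_j$ dominates $\bigcup_{i\in I_j}\Pi_i$. The target is the lemma
\[|S_j|\ge 3|I_j|-\rho^{\{2\}}(G),\]
which should then be combined with the column count in the form $|D|+\sum_j|I_j|\ge km$, counting one representative row per block.

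\emph{Conclusion.} Granting the lemma, sum it over the $m$ blocks to get $|D|\ge 3x-m\rho^{\{2\}}(G)$ with $x=\sum_j|I_j|$. Combine this with $|D|\ge km-x$ by taking $3$ times the second inequality plus the first:
\[4|D|\ge 3km-m\rho^{\{2\}}(G)=(3\gamma(G)-\rho^{\{2\}}(G))\gamma(H).\]
The coefficients $3$ and $1$ are chosen precisely so that this linear combination eliminates $x$.

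\emph{Proof of the lemma (expected main obstacle).} Write $S=S_j$ and $I=I_j$. The first step is to use minimality: $S\cup\{u_i:i\notin I\}$ dominates $G$, so $|S|\ge |I|$. This is the Clark–Suen bound, and the lemma must improve it by roughly $2|I|-\rho^{\{2\}}(G)$. The natural route is LP duality between $k$-dominating and $k$-packing functions.
\begin{itemize}
\item Build a $2$-dominating function of $G$ of total weight about $2|S|+2(k-|I|)-(\text{savings})$. Give weight $2$ to the $u_i$ with $i\notin I$ and weight about $2$ to $S$, and note that a vertex of $S$ lying in a vertically dominated cell can simultaneously serve that cell. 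Set against this a $2$-packing function built from the cells indexed by $I$.
\item Alternatively, argue directly. Since $S\cup\{u_i\}_{i\notin I}$ is a minimum dominating set, the cells in $I$ admit a private-neighbour structure. Assign weight $2$ to well-chosen private representatives and weight $1$ elsewhere, obtain a $2$-packing of weight about $3|I|-|S|$, and compare with $\rho^{\{2\}}(G)$.
\end{itemize}
I would try the second, private-neighbour $2$-packing construction first. Getting the constant $3$ exactly right is where I expect the real difficulty to lie. If the clean form fails, I would fall back to a weaker inequality $|S|\ge a|I|-b\,\rho^{\{2\}}(G)$ and re-optimise the linear combination with the column inequality.
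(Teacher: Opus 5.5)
Your row-block lemma $|S_j|\ge 3|I_j|-\rho^{\{2\}}(G)$ is false, even inside your own setup, so no private-neighbour or LP-duality construction can prove it.

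\emph{Counterexample.} Take $H=K_1$ and $G=K_4\square K_4$, the $4\times 4$ rook's graph. Then $\gamma(G)=4$ and $\rho^{\{2\}}(G)=4$. Indeed, every closed neighbourhood is a row together with a column and has $7$ vertices, and every vertex lies in $7$ of them. Hence every $2$-packing function satisfies $7f(V(G))\le 2\cdot 16$, and the diagonal attains $4$. Choose $u_i=(i,i)$ and let $\Pi_i$ be the $i$-th row. Let $D$ correspond to the fourth row; it is a minimum dominating set of $G\square K_1\cong G$. Then $(i,v_1)$ is horizontal exactly for $i\in I_1=\{1,2,3\}$, and $S_1$ is the fourth row. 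Your lemma would therefore require $4\ge 3\cdot 3-4=5$, which fails.

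\emph{A second, independent gap.} The column count you combine the lemma with, $|D|+\sum_j|I_j|\ge km$, is not what Clark--Suen gives. Their count involves $\sum_{h\in V(H)}\ell_h$ over all rows. You cannot restrict it to the representative rows $v_j$: vertical domination of $(i,v_j)$ only supplies a projected vertex in $N_H[v_j]$. That vertex dominates $v_j$, but not the rest of $Q_j$.

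The missing idea is to put the improvement into the other inequality, and to use $\rho^{\{2\}}$ of the factor that is \emph{not} partitioned. Keep the Clark--Suen row inequality $\ell_h\le|D\cap(V(G)\times\{h\})|$, summed over all $h\in V(H)$, unchanged. In the column inequality, replace the wasteful ``one vertex for every horizontal row'' by
\[\gamma(H)\le|D\cap(\Pi_i\times V(H))|+\gamma_H(X_i),\qquad X_i=\{h:(i,h)\text{ horizontal}\},\]
and bound $\gamma_H(X_i)\le(|X_i|+\rho^{\{2\}}(H))/3$. Summing over $i$ and using $\sum_i|X_i|=\sum_h\ell_h\le|D|$ gives $km\le\frac{4}{3}|D|+\frac{1}{3}k\rho^{\{2\}}(H)$. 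This is the second term of the maximum, and symmetry gives the first. It is essentially the paper's proof with $G$ and $H$ interchanged.

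The real work is the bound $\gamma_H(X)\le(|X|+\rho^{\{2\}}(H))/3$ for arbitrary $X\subseteq V(H)$, which the paper proves by induction on $|X|$. If some closed neighbourhood contains three vertices of $X$, remove them at the cost of one dominator. Otherwise, average with weights $\tfrac13$ and $\tfrac23$ two bounds: $\gamma_H(X)\le\rho^{\{2\}}(H)-\rho_H(X)$ and the matching bound $\gamma_H(X)\le(|X|+\rho_H(X))/2$. The first of these comes from K\"onig's theorem. Take a maximum $2$-packing $P\subseteq X$, and a maximum independent set of the bipartite graph between $X\setminus P$ and $P$ whose edges join vertices with intersecting closed neighbourhoods. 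Together these two sets give a $2$-packing function of weight at least $\rho_H(X)+\gamma_H(X)$.
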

In particular, note that this theorem gives a significant improvement over the constant in Theorem~\ref{thm:main} whenever one of $G$ or $H$ satisfies that $\rho^{\{2\}}(\cdot)$ is small relative to $\gamma(\cdot)$.

To prove our main result, Theorem~\ref{thm:main}, we combine Theorem~\ref{thm:Vizing2packing} with the aforementioned Theorem~\ref{thm:bresar} as well as with another lower bound on $\gamma(G\square H)$ which is a consequence of a result due to Hou and Lu~\cite{houlu}, which is stated in Section~\ref{sec:proof}.

\paragraph*{Notation and Terminology.} We summarize some important notation and terminology used throughout the paper. All graphs considered in this paper are simple, finite and have at least one vertex. Given a graph $G$, we denote by $V(G)$ its vertex set and by $E(G)$ its edge set. Given a vertex $v\in V(G)$, we denote by $N_G[v]$ its \emph{closed neighborhood}, i.e. the set consisting of $v$ and all its neighbors. We say that a vertex $v\in V(G)$ is \emph{dominated} by a subset $D\subseteq V(G)$ of vertices (or equivalently, that $D$ \emph{dominates} $v$) if $N[v]\cap D\neq \emptyset$, and we say that $D$ \emph{dominates} a subset $S\subseteq V(G)$ if every member of $S$ is dominated by $D$. We denote the minimum size of a set of vertices in $G$ which dominates $S$ as $\gamma_G(S)$. In particular, we have $\gamma_G(V(G))=\gamma(G)$ for every graph $G$. We also use $\rho_G(S)$ to denote the maximum size of a subset of $S$ which forms a $2$-packing in $G$ (i.e., a subset of vertices such that any two vertices in this subset have disjoint closed neighborhoods). Given a weight function $f:V(G)\rightarrow \mathbb{Z}_{\ge 0}$ on the vertices of a graph $G$ and a subset $S\subseteq V(G)$, we denote by $f(S):=\sum_{x\in S}f(x)$ the \emph{total weight} of $S$. 

\paragraph*{Organization.} In Section~\ref{sec:aux} we prove and collect several important auxiliary results for our proof of Theorem~\ref{thm:Vizing2packing}, in particular, we prove a novel upper bound on $\gamma_G(S)$ for subsets $S$ of vertices in a graph $G$ in terms of $\rho^{\{2\}}(G)$ and the size of the subset $S$, which may be of independent interest. In Section~\ref{sec:proof} we then give the proof of Theorem~\ref{thm:Vizing2packing} by combining the new ingredients from Section~\ref{sec:aux} with the same proof approach used by Bre\v{s}ar~\cite{bresar} to prove Theorem~\ref{thm:bresar}. Afterwards, we derive a lower bound on $\gamma(G\square H)$ involving $\rho^{\{2\}}(G), \rho^{\{2\}}(H)$ from a result by Hou and Lu~\cite{houlu}. Finally, we combine this bound with the bounds given by Theorems~\ref{thm:bresar} and~\ref{thm:Vizing2packing} to prove our main result, Theorem~\ref{thm:main}.

\paragraph*{AI Disclosure.} We used ChatGPT 5.5 Pro to assist us with the search for literature surrounding Vizing's conjecture, in particular providing us with the important references~\cite{houlu,bresar} which are crucial ingredients in our proof. We also used it to help us check this document for misprints and typos.

\section{Auxiliary results}\label{sec:aux}
In this section, we prove a new upper bound on $\gamma_G(S)$ in terms of the size of the subset $S$ of vertices and $\rho^{\{2\}}(G)$ (Lemma~\ref{lem:cor} below). We prepare its proof with two different such upper bounds, from which Lemma~\ref{lem:cor} can then be derived. While the first (Lemma~\ref{lem:key} below) is novel, the second (Lemma~\ref{lem:obs} below) already appears in slightly weaker form (replacing $\gamma_G(S)$ by $\gamma(G)$) in the work of Bre\v{s}ar.
\begin{lem}\label{lem:key}
Let $G$ be a graph and $S\subseteq V(G)$. If $|N_G[v]\cap S|\le 2$ for every $v\in V(G)$, then we have
$$\gamma_G(S)\le \rho^{\{2\}}(G)-\rho_G(S).$$
\end{lem}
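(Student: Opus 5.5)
The plan is to exhibit an explicit $2$-packing function $f$ on $G$ together with an explicit set $D$ that dominates $S$, such that
$$f(V(G))\ge |D|+\rho_G(S).$$
Since $\rho^{\{2\}}(G)\ge f(V(G))$ and $\gamma_G(S)\le |D|$, this gives the lemma. Two facts are available from the hypothesis $|N_G[v]\cap S|\le 2$ for all $v$. First, the indicator function $\mathds 1_S$ is already a $2$-packing function. Second, every $x\in S$ has at most one neighbour in $S$. So the trivial bound is $\rho^{\{2\}}(G)\ge |S|\ge\gamma_G(S)$, and the task is to gain an extra $\rho_G(S)$.

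To organise this, I introduce the auxiliary "conflict graph" $A$ on vertex set $S$, where $x\sim_A y$ iff $N_G[x]\cap N_G[y]\neq\emptyset$. Then $2$-packings contained in $S$ are exactly the independent sets of $A$. I fix a maximum $2$-packing $P\subseteq S$, so $|P|=\rho_G(S)$, and among all maximum ones I choose $P$ to minimise a secondary potential: the number of pairs $(p,y)$ with $p\in P$, $y\in S\setminus P$ and $p\sim_A y$. The candidate weight function is
$$f=2\cdot\mathds 1_P+\mathds 1_{S\setminus(P\cup Y)},\qquad Y:=\{y\in S\setminus P:\ y\sim_A p\text{ for some }p\in P\}.$$
It is a $2$-packing function for three reasons. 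A closed neighbourhood meets $P$ at most once. If $N[v]$ meets some $p\in P$, then its only other possible $S$-vertex is in $Y$ and carries weight $0$. Otherwise $N[v]$ carries at most two weight-$1$ vertices. Its total weight is $|S|+|P|-|Y|$.

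For the dominating set, I take $S\setminus(P\cup Y)$ itself, which dominates those vertices. For each $p\in P$, let $Y_p=\{y\in Y: y\sim_A p\}$. By the hypothesis each $y\in Y$ lies in exactly one $Y_p$, because $P$ is a packing and $y$'s neighbourhood structure is limited. I then dominate $\{p\}\cup Y_p$ cheaply. If $|Y_p|\le 1$, a single common neighbour $w\in N[p]\cap N[y]$ (or $p$ itself) dominates both. If $|Y_p|\ge 2$, I use $|Y_p|$ vertices, namely for each $y\in Y_p$ a vertex in $N[p]\cap N[y]$. Summing, $|D|\le |S\setminus(P\cup Y)|+\sum_p\max(1,|Y_p|)$. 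The desired inequality $f(V)\ge|D|+|P|$ then reduces to $\sum_p \max(1,|Y_p|)\le |P|+|Y|-|P|=|Y|$ when every $Y_p$ is nonempty. This fails exactly when some $p$ has $Y_p=\emptyset$ (that $p$ is isolated in $A$, and it costs $1$ in $D$ while contributing $2$ to $f$, which is fine) or when $|Y_p|=1$ (cost $1$, contribution $2+0$ versus $|P|$-share $1$ plus the $1$ lost from $y$, which balances). So the bookkeeping should close cluster by cluster: each cluster $\{p\}\cup Y_p$ contributes $2$ to $f$, at most $\max(1,|Y_p|)$ to $D$, and $1$ to $\rho_G(S)$.

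The main obstacle is clusters with $|Y_p|\ge 2$, where the count gives $2\ge \max(1,|Y_p|)+1$, which is false. Here I would use the extremal choice of $P$. If $y_1,y_2\in Y_p$ are non-adjacent in $A$, then swapping $p$ out for $\{y_1,y_2\}$ would enlarge the packing, unless some $y_i$ conflicts with another element of $P$. The hypothesis $|N[v]\cap S|\le2$ forbids such conflicts from being "shared", because each $y$ lies in a unique $Y_p$. So $Y_p$ must be a clique in $A$. I would then show that a clique of conflicts with all members at distance $\le2$ from $p$ can be dominated by just one or two vertices: either a single vertex adjacent to all of them, or $p$ plus one common neighbour. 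In the latter case I would reassign the weights on the cluster, for instance placing weight $1$ on two vertices of the clique in place of weight $2$ on $p$, to recover the inequality. Checking that this local reassignment keeps every closed neighbourhood at weight $\le 2$ is the step I expect to require the most care.
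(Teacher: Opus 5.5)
Your proposal has a genuine gap. The weight function $f=2\cdot\mathds{1}_P+\mathds{1}_{S\setminus(P\cup Y)}$ is too light, and the cluster-by-cluster repair you sketch cannot rescue it. Since $f(V(G))=|S|+|P|-|Y|$, your argument needs $\gamma_G(S)\le |S|-|Y|$, and this is false in general.

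Take $G=C_6$ with vertices $0,1,\dots,5$ in cyclic order and $S=\{0,2,4\}$. Every closed neighbourhood meets $S$ in at most two vertices, and any two vertices of $S$ have a common neighbour. So $\rho_G(S)=1$ and every maximum $P$ is a singleton, say $\{0\}$. Then $Y=Y_0=\{2,4\}$, which is a clique in $A$, and your extremal choice of $P$ changes nothing. Your $f$ has total weight $2$, whereas $\gamma_G(S)+\rho_G(S)=2+1=3$. Your proposed repair (weight $1$ on two clique vertices instead of weight $2$ on $p$) still has total weight $2$; what is needed here is weight $1$ on each of $0,2,4$.

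Your auxiliary claims also fail. A clique of conflicts around $p$ need not be dominated by one or two vertices. Let $p$ have neighbours $v_1,\dots,v_k$, let $v_i$ be adjacent to $y_i$, and for each $i<j$ add a vertex $w_{ij}$ adjacent to $y_i$ and $y_j$. With $S=\{p,y_1,\dots,y_k\}$ the hypothesis holds and $\rho_G(S)=1$. But $\gamma_G(S)\ge\lceil (k+1)/2\rceil$, since each vertex dominates at most two vertices of $S$, while your $f$ still has weight $2$. Moreover, a vertex of $Y$ can lie in several sets $Y_p$. In the path $p_1v_1yv_2p_2$ with $S=\{p_1,y,p_2\}$, the unique maximum packing is $P=\{p_1,p_2\}$ and $y$ conflicts with both. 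So the clusters do not partition $Y$, and your swap argument for the clique property breaks down.

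The missing idea is to keep weight $1$ on as many vertices of $S\setminus P$ as possible, and to pay for conflicts only along a \emph{matching}. This is what the paper does:
\begin{enumerate}
\item Let $B$ be the bipartite conflict graph between $S\setminus P$ and $P$, and let $t$ be the size of a maximum matching in $B$.
\item The unmatched vertices of $S$, together with one common neighbour for each matched pair, form a set of size $|S|-t$ dominating $S$.
\item By K\"onig's theorem, $B$ has an independent set $I$ with $|I|=|S|-t$.
\item Then $f=\mathds{1}_P+\mathds{1}_I$ is a $2$-packing function. If $N_G[v]$ contained a vertex of $P\cap I$ and another weighted vertex, the latter would have to lie in $I\setminus P$ and be joined to the former in $B$, contradicting independence of $I$.
\item Hence $\rho^{\{2\}}(G)\ge |P|+|I|\ge\rho_G(S)+\gamma_G(S)$.
\end{enumerate}
Your $f$ is the special case $I=P\cup(S\setminus(P\cup Y))$. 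This is a maximum independent set of $B$ only when $Y$ can be matched into $P$, i.e.\ when $|Y|=t$. In the $C_6$ example, by contrast, $|Y|=2>1=t$.
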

\begin{proof}
Let $P\subseteq S$ be a $2$-packing contained in $S$ of maximum size, i.e. $|P|=\rho_G(S)$. We now define an auxiliary bipartite graph $B$ with vertex set $V(B)=S$ and bipartition $V(B)=(S\setminus P)\sqcup P$ as follows: Given $s\in S\setminus P$ and $p\in P$, we make $s$ and $p$ adjacent in $B$ if and only if $N_G[s]\cap N_G[p]\neq \emptyset$. Now, consider a maximum matching $M=\{s_1p_1,\ldots,s_tp_t\}$ in $B$. Since $B$ is a bipartite graph, we can apply K\"{o}nig's theorem to find that $|M|=t$ equals the minimum size of a vertex cover in $B$. Since the vertex covers of a graph are exactly the complements of its independent sets, it now follows that there exists an independent set $I$ in $B$ of size $|I|=|V(B)|-t=|S|-t$. 

Next, let us pick, for each $i\in [t]$, some vertex $d_i\in N_G[s_i]\cap N_G[p_i]$. We can then see that the set $(S\setminus \{s_1,p_1,\ldots,s_t,p_t\})\cup \bigcup_{i=1}^t\{d_i\}$ dominates $S$ in $G$ and has size at most $|S|-2t+t\le |S|-t$. Hence, it follows that $\gamma_G(S)\le |S|-t=|I|$. 

Finally, let us define a non-negative integer weight function $f:V(G)\rightarrow \mathbb{Z}_{\ge 0}$ as follows. For every $x\in V(G)$, we set
$$f(x):=\begin{cases}
2, & \text{ if }x\in P\cap I,\\
1, & \text{ if }x\in (P\cup I)\setminus (P\cap I), \\
0, & \text{ if } x\notin P\cup I.
\end{cases}$$
It then follows directly from this definition that $\sum_{x\in V(G)}f(x)=|P|+|I|$. We next claim that $f(N_G[v])\le 2$ for every $v\in V(G)$. So consider any $v\in V(G)$. Note that by our assumption in the lemma, we then have $|N_G[v]\cap S|\le 2$. Since $f$ is only supported on $P\cup I\subseteq S$ and does not take values larger than $2$, the claim trivially holds if $|N_G[v]\cap (P\cup I)|\le 1$, so we may suppose that there are two distinct elements $v_1, v_2\in N_G[v]\cap (P\cup I)$, and these must then be the only elements of $N_G[v]\cap S$, so that $f(N_G[v])=f(v_1)+f(v_2)$. If both $v_1,v_2\in (P\cup I)\setminus (P\cap I)$, then by definition of $f$ we have $f(v_1)+f(v_2)=1+1\le 2$, and so the claim holds. It thus only remains to consider (up to relabelling) the case  $v_1\in P\cup I, v_2\in P\cap I$. We now claim that in fact, this case cannot occur. Indeed, recalling that $P$ is a $2$-packing, in this case we could not have $v_1\in P$, and so in fact, we would need to have $v_1\in I\setminus P$. Since $N[v_1]\cap N[v_2]$ contains $v$ and is hence non-empty, it then follows by our definition of $B$ that $v_1v_2\in E(B)$. However, since $v_1, v_2\in I$, this would be a contradiction to the fact that $I$ is an independent set in $B$. Hence, indeed this last case is impossible, showing that we have $f(N_G[v])\le 2$ for all $v\in V(G)$. Given this and using the definition of $\rho^{\{2\}}(G)$, it now follows that
$$\rho^{\{2\}}(G)\ge \sum_{x\in V(G)}f(x)=|P|+|I|\ge \rho_G(S)+\gamma_G(S).$$ Rearranging now yields the inequality claimed in the lemma, concluding its proof.
\end{proof}
As mentioned before, we next give a simple bound on $\gamma_G(S)$ in terms of $|S|$ and $\rho_G(S)$.
\begin{lem}\label{lem:obs}
For every graph $G$ and every $S\subseteq V(G)$, we have
$$\gamma_G(S)\le \frac{|S|+\rho_G(S)}{2}.$$
\end{lem}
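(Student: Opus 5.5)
We prove Lemma~\ref{lem:obs} by a greedy pairing argument, the same mechanism as the first half of the proof of Lemma~\ref{lem:key} but without the neighborhood restriction. Starting from $S$ itself as a trivial dominating set of $S$, we repeatedly find two vertices of $S$ whose closed neighborhoods intersect. Such a pair can be dominated by a single common neighbor. Each such pair saves one vertex. We then show that if we cannot pair up enough vertices, a large $2$-packing inside $S$ must exist.

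Concretely, we take a maximal collection of pairwise disjoint pairs $\{s_1,s_1'\},\ldots,\{s_t,s_t'\}$ of vertices of $S$ such that $N_G[s_i]\cap N_G[s_i']\neq\emptyset$ for each $i$. Equivalently, this is a maximal matching in the auxiliary graph $A$ on vertex set $S$ in which two vertices are adjacent whenever their closed neighborhoods in $G$ intersect. For each $i$ we choose $d_i\in N_G[s_i]\cap N_G[s_i']$. Then the set
$$\Bigl(S\setminus \bigcup_{i=1}^t\{s_i,s_i'\}\Bigr)\cup\{d_1,\ldots,d_t\}$$
dominates $S$ and has size at most $|S|-t$, so $\gamma_G(S)\le |S|-t$. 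On the other hand, the set $R$ of the $|S|-2t$ unmatched vertices is independent in $A$ by maximality. Independence in $A$ says precisely that the closed neighborhoods of any two vertices of $R$ are disjoint, so $R$ is a $2$-packing contained in $S$. Hence $\rho_G(S)\ge |S|-2t$, i.e.\ $t\ge (|S|-\rho_G(S))/2$.

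Combining the two inequalities gives
$$\gamma_G(S)\le |S|-t\le |S|-\frac{|S|-\rho_G(S)}{2}=\frac{|S|+\rho_G(S)}{2},$$
as claimed.

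The only step needing care is the identification of independent sets of $A$ with $2$-packings inside $S$. This holds because $s\in N_G[s]$, so disjointness of closed neighborhoods is exactly non-adjacency in $A$ (which also rules out $s$ and $s'$ being adjacent or at distance $2$ in $G$). Beyond that there is no real obstacle, and maximality of the matching suffices; we do not need a maximum matching or K\"onig's theorem here.
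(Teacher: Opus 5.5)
Your proof is correct and follows the paper's argument essentially verbatim: the same auxiliary graph on $S$, a common neighbour $d_i$ for each matched pair, and the unmatched vertices forming a $2$-packing inside $S$. The only difference is that you take a maximal rather than a maximum matching, which is indeed all the argument needs; the paper's proof of this lemma does not use K\"onig's theorem either.
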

\begin{proof}
Define an auxiliary graph $B$ with $V(B)=S$, where two distinct vertices $s_1,s_2\in S$ form an edge if and only if $N_G[s_1]\cap N_G[s_2]\neq \emptyset$. Let $M=\{s_1s_1',\ldots,s_ts_t'\}$ be a maximum matching in $B$. For each $i\in [t]$, pick some vertex $d_i\in N_G[s_i]\cap N_G[s_i']$. Then the set $(S\setminus \{s_1,s_1',\ldots,s_t,s_t'\})\cup \bigcup_{i=1}^t\{d_i\}$ dominates $S$ in $G$, and hence we have $\gamma_G(S)\le |S|-2t+t=|S|-t$. Note further that $S\setminus \{s_1,s_1',\ldots,s_t,s_t'\}$ is an independent set in $B$, and hence a $2$-packing in $G$. In particular, we find that $|S|-2t\le \rho_G(S)$. Rearranging yields $t\ge \frac{|S|-\rho_G(S)}{2}$. Plugging this into our above upper bound on $\gamma_G(S)$ now yields
$$\gamma_G(S)\le |S|-t\le |S|-\frac{|S|-\rho_G(S)}{2}=\frac{|S|+\rho_G(S)}{2},$$ as desired. This concludes the proof of the lemma.
\end{proof}
Finally, we combine the previous two lemmas with an inductive proof to obtain the following new upper bound on $\gamma_G(S)$, which will be key in our proof of Theorem~\ref{thm:Vizing2packing} in the next section.
\begin{lem}\label{lem:cor}
For every graph $G$ and every $S\subseteq V(G)$, we have
$$\gamma_G(S)\le \frac{|S|+\rho^{\{2\}}(G)}{3}.$$
\end{lem}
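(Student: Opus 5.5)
The proof is by strong induction on $|S|$, keeping the graph $G$ fixed throughout. This matters because the right-hand side involves $\rho^{\{2\}}(G)$, a parameter of $G$ alone, and it stays unchanged when we pass to a smaller subset. The base case $S=\emptyset$ is trivial: $\gamma_G(\emptyset)=0$ and $\rho^{\{2\}}(G)\ge 0$.

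\emph{Case 1.} For the induction step, first suppose some vertex $v\in V(G)$ satisfies $|N_G[v]\cap S|\ge 3$. Put $v$ into the dominating set and set $S':=S\setminus N_G[v]$. Every vertex of $S\cap N_G[v]$ is dominated by $v$, so
$$\gamma_G(S)\le 1+\gamma_G(S').$$
Since $|S'|\le |S|-3<|S|$, the induction hypothesis gives
$$\gamma_G(S)\le 1+\frac{|S|-3+\rho^{\{2\}}(G)}{3}=\frac{|S|+\rho^{\{2\}}(G)}{3}.$$

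\emph{Case 2.} Otherwise $|N_G[v]\cap S|\le 2$ for every $v\in V(G)$, so the hypothesis of Lemma~\ref{lem:key} holds. Write $\rho:=\rho_G(S)$. Lemma~\ref{lem:key} and Lemma~\ref{lem:obs} give
$$\gamma_G(S)\le \rho^{\{2\}}(G)-\rho \qquad\text{and}\qquad \gamma_G(S)\le \frac{|S|+\rho}{2}.$$
Neither bound alone suffices, because $\rho$ is uncontrolled. However, the first bound decreases in $\rho$ and the second increases in $\rho$. Taking the convex combination with weights $\tfrac13$ and $\tfrac23$ cancels the $\rho$ terms:
$$\gamma_G(S)\le \tfrac13\bigl(\rho^{\{2\}}(G)-\rho\bigr)+\tfrac23\cdot\frac{|S|+\rho}{2}=\frac{|S|+\rho^{\{2\}}(G)}{3}.$$

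The only delicate point is choosing the greedy reduction in Case~1 so that it removes at least three vertices of $S$ at the cost of one dominator. This is exactly the rate $\tfrac13$ appearing in the bound, and it is what forces the leftover situation into the hypothesis of Lemma~\ref{lem:key}. Once that reduction is in place, the weights $\tfrac13,\tfrac23$ are forced by requiring the $\rho_G(S)$ terms to cancel, so I expect no real obstacle.
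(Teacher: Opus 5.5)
Your proposal is correct and follows the paper's proof essentially step for step. It uses strong induction on $|S|$ with $G$ fixed, and removes $N_G[v]$ at the cost of one dominator whenever $|N_G[v]\cap S|\ge 3$. Otherwise it takes the $\tfrac13$--$\tfrac23$ convex combination of Lemma~\ref{lem:key} and Lemma~\ref{lem:obs}, which cancels $\rho_G(S)$.
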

\begin{proof}
We prove the claim for the fixed graph $G$ by induction on $|S|$. It trivially holds when $|S|=0$ since then $\gamma_G(S)=\gamma_G(\emptyset)=0$. For the induction step, suppose we are given a non-empty subset $S\subseteq V(G)$ and that we already proved $\gamma_G(S')\le \frac{|S'|+\rho^{\{2\}}(G)}{3}$ for all $S'\subseteq V(G)$ with $|S'|<|S|$. Our goal in the following is to show that $\gamma_G(S)\le \frac{|S|+\rho^{\{2\}}(G)}{3}$.

Suppose first that there exists some vertex $v\in V(G)$ with $|N_G[v]\cap S|\ge 3$. Then define $S':=S\setminus N_G[v]$ and observe that $|S'|\le |S|-3$ as well as $\gamma_G(S)\le \gamma_G(S')+1$. By the inductive assumption, we now find 
$$\gamma_G(S)\le \gamma_G(S')+1\le \frac{|S'|+\rho^{\{2\}}(G)}{3}+1\le \frac{|S|+\rho^{\{2\}}(G)}{3},$$ concluding the proof in this case.

Hence, moving on we may assume that $|N_G[v]\cap S|\le 2$ for every $v\in V(G)$. We may now apply Lemma~\ref{lem:key} to find that $\gamma_G(S)\le \rho^{\{2\}}(G)-\rho_G(S)$. Furthermore, by Lemma~\ref{lem:obs} we have $\gamma_G(S)\le \frac{|S|+\rho_G(S)}{2}$. Taking a convex combination of these two inequalities, we find 
$$\gamma_G(S)\le \frac{1}{3}\cdot(\rho^{\{2\}}(G)-\rho_G(S))+\frac{2}{3}\cdot \frac{|S|+\rho_G(S)}{2}=\frac{|S|+\rho^{\{2\}}(G)}{3},$$ as desired. This concludes the proof of the lemma.
\end{proof}

\section{Proofs of the main results}\label{sec:proof}
In this section, we prove our main results, namely Theorems~\ref{thm:Vizing2packing} and~\ref{thm:main}. 

We start with the proof of Theorem~\ref{thm:Vizing2packing}. Its structure follows closely that of Bre\v{s}ar in~\cite{bresar}, but the key difference lies in using the new upper bound $\gamma_G(S)\le \frac{|S|+\rho^{\{2\}}(G)}{3}$ from Lemma~\ref{lem:cor} at the relevant point, rather than the bound $\gamma_G(S)\le \frac{|S|+\rho(G)}{2}$ used by Bre\v{s}ar in~\cite{bresar}. 

\begin{proof}[Proof of Theorem~\ref{thm:Vizing2packing}]
Let $G$ and $H$ be given arbitrarily. Since $G\square H$ and $H\square G$ are isomorphic and thus have the same domination number, to establish the theorem it is sufficient to prove the inequality 
$$\gamma(G\square H)\ge \frac{3\gamma(G)-\rho^{\{2\}}(G)}{4}\gamma(H).$$

In the remainder of the proof, let us fix $D$ to be a minimum dominating set of $G\square H$. In the following, we closely follow the terminology and structure of the proof of Theorem~\ref{thm:Vizing2packing} in~\cite{bresar}. In particular, for a subset $X\subseteq V(G)\times V(H)$ we denote by $P_G(X)\subseteq V(G), P_H(X)\subseteq V(H)$ its projections to the first and second coordinates, respectively.

We start by considering a minimum dominating set $\{h_1,\ldots,h_k\}$ for $H$ and throughout the remainder of the proof fix a partition of $V(H)$ into subsets $\pi_1,\ldots,\pi_k$ such that $\pi_i\subseteq N_H[h_i]$ for each $i$ (evidently, such a partition exists).

For $i=1,\ldots,k$ define $G_i:=V(G)\times \pi_i$ and $D_i:=D\cap G_i$. Given some $g\in V(G)$, the set $C_g^i:=\{g\}\times \pi_i$ is called a \emph{cell}. We say that a cell $C_g^i$ is \emph{vertically dominated} by $D$ if each of its vertices has a neighbor (in $G\square H$) within the set $D\cap (\{g\}\times (V(H)\setminus \pi_i))$, otherwise it is called \emph{vertically undominated}.

For every $i\in [k]$, let us define $L_i:=\{g\in V(G)|C_g^i\text{ is vertically dominated}\}.$

Pause to note that by definition of ``vertically undominated'', for every vertex $g\in V(G)\setminus L_i$, we have that there exists some $h\in \pi_i$ such that the vertex $(g,h)\in C_g^i$ does not have a neighbor in $D\cap (\{g\}\times (V(H)\setminus \pi_i))$. Since $D$ is a dominating set, this implies that either the set $C_g^i$ contains a neighbor of $(g,h)$ in $D$ (which is then also contained in $D_i$), or that there exists some $g'\in N_G[g]$ such that $(g',h)\in D$ (which is then, as well, contained in $D_i$). In each case, we find that $g$ is contained in the closed neighborhood (in $G$) of some vertex in $P_G(D_i)$. This implies that, for each $i\in [k]$, we have that $\gamma_G(V(G)\setminus L_i)\le |P_G(D_i)|\le |D_i|$.

Furthermore, by Lemma~\ref{lem:cor} we have that $\gamma_G(L_i)\le \frac{|L_i|+\rho^{\{2\}}(G)}{3}$. Combining these bounds, we find that
$$\gamma(G)\le \gamma_G(V(G)\setminus L_i)+\gamma_G(L_i)\le |D_i|+\frac{|L_i|+\rho^{\{2\}}(G)}{3}$$ for all $i\in [k]$. Summing this inequality over all $i$ yields
$$\gamma(G)\gamma(H)=k\gamma(G)\le \sum_{i=1}^k\left(|D_i|+\frac{|L_i|+\rho^{\{2\}}(G)}{3}\right)=|D|+\frac{1}{3}\sum_{i=1}^{k}|L_i|+\frac{\rho^{\{2\}}(G)\gamma(H)}{3}.$$
Next, for each $g\in V(G)$, let us denote by $m_g$ the number of all vertically dominated cells $C_g^i$ where $i$ ranges in $[k]$. Clearly,  $\sum_{i=1}^k|L_i|=\sum_{g\in V(G)}m_g$, since both sums count the total number of vertically dominated cells. 

Now fix any $g\in V(G)$. By definition of ``vertically dominated'', we then have that $P_H(D\cap (\{g\}\times V(H)))$ dominates (in $H$) all the sets $\pi_i$ such that $C_g^i$ is vertically dominated. Furthermore, for each of the $k-m_g$ indices $i\in [k]$ such that $C_g^i$ is not vertically dominated, we can dominate (in $H$) all vertices in $\pi_i$ using the vertex $h_i$. All in all, this implies that $$\gamma(H)\le |P_H(D\cap (\{g\}\times V(H)))|+k-m_g\le |D\cap (\{g\}\times V(H))|+k-m_g.$$  Since $k=\gamma(H)$, we have shown that $m_g\le |D\cap (\{g\}\times V(H))|$ for all $g\in V(G)$. Summing this over all $g\in V(G)$ now yields that 
$$\sum_{i=1}^k|L_i|=\sum_{g\in V(G)}m_g\le |D|.$$
Let us plug this estimate into our above upper bound on $\gamma(G)\gamma(H)$. We then obtain:
$$\gamma(G)\gamma(H)\le |D|+\frac{1}{3}|D|+\frac{\rho^{\{2\}}(G)\gamma(H)}{3}.$$ Rearranging this yields
$$\gamma(G\square H)=|D|\ge \frac{3}{4}\left(\gamma(G)\gamma(H)-\frac{\rho^{\{2\}}(G)\gamma(H)}{3}\right)=\frac{3\gamma(G)-\rho^{\{2\}}(G)}{4}\gamma(H),$$ as desired. This concludes the proof of the theorem.
\end{proof}
Before giving the proof of our main result, Theorem~\ref{thm:main}, we need to derive another important lower bound on $\gamma(G\square H)$ involving $\rho^{\{2\}}(G)$ and $\rho^{\{2\}}(H)$ (stated in Corollary~\ref{cor:secondlowerbound} below). This bound will be a consequence of the following, stronger, bound, shown by Hou and Lu.
\begin{thm}[Hou and Lu, cf. equation~(5) in~\cite{houlu}]\label{thm:houlu}
For all graphs $G$ and $H$, we have
$$\gamma(G\square H)\ge \rho(G)\gamma(H)+\gamma^{\{m\}}(H),$$
where $m:=\gamma(G)-\rho(G)$.
\end{thm}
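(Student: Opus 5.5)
Fix a maximum $2$-packing $P=\{p_1,\ldots,p_r\}$ of $G$, so $r=\rho(G)$, and a minimum dominating set $D$ of $G\square H$. For each $j$, let $B_j:=N_G[p_j]\times V(H)$. These ``blocks'' are pairwise disjoint because $P$ is a $2$-packing.

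\textbf{Step 1: each block contributes $\gamma(H)$.} Every vertex $(p_j,h)$ has all its neighbours inside $B_j$. Hence $D\cap B_j$ dominates the fibre $\{p_j\}\times V(H)$. Projecting $D\cap B_j$ to $H$ therefore gives a dominating set of $H$, so $|D\cap B_j|\ge\gamma(H)$. For the refinement, for each $h\in V(H)$ define the weight
$$w(h):=\sum_j\bigl(|D\cap B_j\cap(V(G)\times\{h\})|-\text{(one unit used for block }j)\bigr).$$
More precisely, I would choose for each $j$ a set $T_j\subseteq D\cap B_j$ whose projection dominates $H$ minimally. The counting then splits as $|D|=\sum_j|T_j|+|D\setminus\bigcup_j T_j|$, with $|T_j|\ge \gamma(H)$. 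It remains to show that the leftover part $D':=D\setminus\bigcup_jT_j$ has size at least $\gamma^{\{m\}}(H)$.

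\textbf{Step 2: a fractional-multiplicity dominating function on $H$.} Define $f(h):=|D'\cap(V(G)\times\{h\})|$ plus an appropriate share of the excess in the blocks. We want $f(N_H[h])\ge m=\gamma(G)-\rho(G)$ for every $h$. The natural argument fixes $h$ and looks at the $G$-layer $V(G)\times\{h\}$. Let $X_h$ be the set of vertices $g$ such that $(g,h)$ is dominated within the layer by $D$, or vertically from $D$-vertices in the fibre $\{g\}\times N_H[h]$. Then the projection to $G$ of $D\cap(V(G)\times N_H[h])$ dominates $V(G)$. To reach total weight $\gamma(G)$ we may replace the contributions inside the blocks by the $r$ vertices $p_j$, since each $p_j$ dominates all of $N_G[p_j]$. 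This gives
$$\gamma(G)\le r+\bigl|P_G\bigl(D'\cap (V(G)\times N_H[h])\bigr)\bigr|,$$
provided $D'$ accounts for everything outside the blocks. Rearranging yields $f(N_H[h])\ge m$, so $|D'|\ge\gamma^{\{m\}}(H)$.

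\textbf{Main obstacle.} The difficulty is the interaction between Steps 1 and 2. The vertices outside $\bigcup_j B_j$ are handled cleanly by Step 2. However, vertices of $G$ in $N_G[p_j]$ can be dominated from outside the block, and vertices outside the blocks can be dominated by $D$-vertices inside the blocks. So I must check that replacing block contents by $p_j$ in Step 2 never requires $D$-vertices that were already spent on $T_j$ in Step 1.

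The first thing I would try is to charge everything by projection onto the layer structure. Vertices in blocks are counted only toward $\gamma(H)$ via their projections. Every non-block $G$-vertex $g\notin\bigcup N_G[p_j]$ is dominated in layer $h$ either by $D$ in the same fibre or through a $G$-neighbour. A $G$-neighbour inside a block lies in $N_G[p_j]\setminus\{p_j\}$, so $p_j$ does not necessarily cover $g$. This is exactly the delicate point.

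I expect Hou and Lu handle it by a more careful choice: take $D\cap B_j$ entirely as part of the block count, but use only $\gamma(H)$ of it. The surplus $|D\cap B_j|-\gamma(H)$ is then redistributed into $f$. Since the statement is cited from \cite{houlu}, the paper presumably just invokes it. My plan would be to follow this surplus-redistribution scheme and verify the inequality $f(N_H[h])\ge m$ layer by layer.
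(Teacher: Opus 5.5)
\emph{Note:} the paper gives no proof of this statement; it is quoted from Hou and Lu. Your sketch therefore has to stand on its own, and it has a genuine gap.

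Step~1 is correct. The displayed inequality of Step~2, $\gamma(G)\le r+|P_G(D'\cap(V(G)\times N_H[h]))|$, carries the entire content of the theorem and is left unproved. With your rule for choosing the $T_j$ it is in fact false. The justification ``replace the contributions inside the blocks by the $p_j$'' fails for exactly the reason you name. A vertex $(y,h)\in T_j$ with $y\in N_G(p_j)$ can be the only dominator of some $(x,h)$ with $x\notin\bigcup_i N_G[p_i]$. Nothing in ``choose $T_j$ whose projection dominates $H$ minimally'' prevents this.

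Concretely, let $G=C_4=v_1v_2v_3v_4$ and $P=\{v_2\}$, so $r=m=1$. Let $H$ be the path $z\,w\,u\,h\,h'\,u'$, and let $D=\{(v_1,z),(v_2,w),(v_3,w),(v_4,w),(v_1,h),(v_3,h'),(v_1,u')\}$, which dominates $G\square H$. The set $T=\{(v_1,z),(v_1,h),(v_3,h')\}\subseteq D\cap B_1$ is inclusion-minimal with $P_H(T)=\{z,h,h'\}$ dominating $H$. Yet $D\setminus T$ has no vertex in the layers $u,h,h'$, so at $h$ your inequality reads $2\le 1$. Steps~1--2 never use minimality of $D$, so this refutes the argument, not the theorem; another choice of $T$ happens to work here.

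If instead $T_j$ is taken minimal subject to dominating the fibre $\{p_j\}\times V(H)$, the failure is even simpler. In $C_4\square K_2$ with $D=\{(v_1,1),(v_3,2)\}$, this forces $T_1=D$ and leaves $f\equiv 0$, although $m=1$. The ``appropriate share of the excess'' that would repair this is never defined.

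So what is missing is the main idea, not bookkeeping. You need a rule for choosing the $T_j$, or a different $m$-dominating function on $H$, for which $f(N_H[h])\ge\gamma(G)-\rho(G)$ can actually be proved even though block vertices in $N_G(p_j)$ may dominate vertices outside all blocks. The case $\gamma(G)=\rho(G)+1$ shows this is the heart of the matter. There $\gamma^{\{1\}}(H)=\gamma(H)$, and the statement is exactly Vizing's conjecture for $G$ (e.g.\ $\gamma(C_4\square H)\ge 2\gamma(H)$ for every $H$). Your sketch would yield that in a few lines if Step~2 held as written. As it stands, the proposal defers the decisive step to ``I expect Hou and Lu handle it by a more careful choice'' and is a plan rather than a proof.
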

Before stating the relevant corollary, let us make the following simple observation.
\begin{obs}\label{obs:obvious}
For every graph $H$ and every $m\in \mathbb{Z}_{\ge 0}$, we have $\gamma^{\{m\}}(H)\ge \frac{\rho^{\{2\}}(H)}{2}m$.
\end{obs}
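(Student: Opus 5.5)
This is an LP-duality (double counting) argument. Let $f$ be an $m$-dominating function of $H$ with $f(H)=\gamma^{\{m\}}(H)$, so $f(N_H[v])\ge m$ for every $v\in V(H)$. Let $g$ be a $2$-packing function of $H$ with $g(H)=\rho^{\{2\}}(H)$, so $g(N_H[u])\le 2$ for every $u\in V(H)$. We compute
$$\Sigma:=\sum_{v\in V(H)} g(v)\,f(N_H[v])$$
in two ways.

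\textbf{Lower bound.} Since every $f(N_H[v])\ge m$ and $g\ge 0$, we get $\Sigma\ge m\sum_v g(v)=m\,\rho^{\{2\}}(H)$.

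\textbf{Upper bound.} Expand $f(N_H[v])=\sum_{u\in N_H[v]}f(u)$ and swap the order of summation, using that $u\in N_H[v]$ iff $v\in N_H[u]$:
$$\Sigma=\sum_{u\in V(H)} f(u)\,g(N_H[u]).$$
Each $g(N_H[u])\le 2$ and $f\ge 0$, so $\Sigma\le 2f(H)=2\gamma^{\{m\}}(H)$.

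Combining the two bounds gives $\gamma^{\{m\}}(H)\ge \frac{\rho^{\{2\}}(H)}{2}m$. The case $m=0$ is trivial, since the right-hand side is $0$. Nothing here is a real obstacle; the only point needing care is the symmetry of closed neighborhoods used in the swap of summation.
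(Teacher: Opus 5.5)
Your proof is correct and is essentially the paper's argument: the same double counting over closed neighborhoods, using the symmetry $u\in N_H[v]\iff v\in N_H[u]$ to swap the order of summation. The only difference is cosmetic: you start from the packing-weighted side $\sum_v g(v)\,f(N_H[v])$ of the identity, whereas the paper starts from the dominating-weighted side.
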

\begin{proof}
To start with, recall that by definition of $\rho^{\{2\}}(H)$ and $\gamma^{\{m\}}(H)$, there exist functions $f,g:V(H)\rightarrow \mathbb{Z}_{\ge 0}$ such that $f(N_H[v])\le 2$ for every $v\in V(H)$ and $\sum_{v\in V(H)}f(v)=\rho^{\{2\}}(H)$, as well as $g(N_H[v])\ge m$ for every $v\in V(H)$ and $\sum_{v\in V(H)}g(v)=\gamma^{\{m\}}(H)$. 

Using these inequalities, we now find
\begin{align*}\gamma^{\{m\}}(H)&=\sum_{v\in V(H)}g(v)
\ge \frac{1}{2}\sum_{v\in V(H)}g(v)f(N_H[v])\\
&=\frac{1}{2}\sum_{v\in V(H)}\sum_{w\in N_H[v]}g(v)f(w)\\
&=\frac{1}{2}\sum_{w\in V(H)}\sum_{v\in N_H[w]}g(v)f(w)\\
&=\frac{1}{2}\sum_{w\in V(H)}f(w)g(N_H[w])\\
&\ge \frac{m}{2}\sum_{w\in V(H)}f(w)=\frac{m}{2}\rho^{\{2\}}(H)=\frac{\rho^{\{2\}}(H)}{2}m,\end{align*} as desired. This concludes the proof.
\end{proof}

The following lower bound on $\gamma(G\square H)$ is now an immediate consequence of Theorem~\ref{thm:houlu} and Observation~\ref{obs:obvious}. 
\begin{cor}\label{cor:secondlowerbound}
For all graphs $G$ and $H$, we have 
\begin{align*}&\gamma(G\square H)\ge\\ &\max\left\{\rho(G)\gamma(H)+\frac{\rho^{\{2\}}(H)}{2}(\gamma(G)-\rho(G)),\rho(H)\gamma(G)+\frac{\rho^{\{2\}}(G)}{2}(\gamma(H)-\rho(H))\right\}.\end{align*}
\end{cor}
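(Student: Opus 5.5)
The plan is to prove each of the two lower bounds inside the maximum separately; the corollary then follows at once. Because $G\square H$ and $H\square G$ are isomorphic, it suffices to prove the first expression and obtain the second by swapping the roles of $G$ and $H$.

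First, set $m:=\gamma(G)-\rho(G)$. This is a non-negative integer, since every graph satisfies $\rho(G)\le\gamma(G)$. So $m\in\mathbb{Z}_{\ge 0}$ and both Theorem~\ref{thm:houlu} and Observation~\ref{obs:obvious} apply with this $m$.

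Theorem~\ref{thm:houlu} gives
$$\gamma(G\square H)\ge \rho(G)\gamma(H)+\gamma^{\{m\}}(H).$$
Observation~\ref{obs:obvious}, applied to the graph $H$ with this value of $m$, gives
$$\gamma^{\{m\}}(H)\ge \frac{\rho^{\{2\}}(H)}{2}\,(\gamma(G)-\rho(G)).$$
Substituting the second inequality into the first yields the first term of the maximum.

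There is essentially no obstacle here. The only point that needs checking is that $m\ge 0$, which is exactly what Observation~\ref{obs:obvious} requires.

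Applying the same two steps with $G$ and $H$ interchanged gives the second term, $\rho(H)\gamma(G)+\frac{\rho^{\{2\}}(G)}{2}(\gamma(H)-\rho(H))$. Since $\gamma(G\square H)$ is at least each of the two expressions, it is at least their maximum.
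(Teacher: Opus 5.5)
Your proof is correct and is the paper's argument: apply Theorem~\ref{thm:houlu} with $m=\gamma(G)-\rho(G)$, bound $\gamma^{\{m\}}(H)$ from below via Observation~\ref{obs:obvious}, and get the second term by symmetry ($G\square H\cong H\square G$). Your explicit check that $m\ge 0$, which follows from $\rho(G)\le\gamma(G)$, is a detail the paper leaves implicit.
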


\begin{proof}
By symmetry, it suffices to show $\gamma(G\square H)\ge \rho(G)\gamma(H)+\frac{\rho^{\{2\}}(H)}{2}(\gamma(G)-\rho(G))$. This follows directly by combining Theorem~\ref{thm:houlu} and Observation~\ref{obs:obvious}.
\end{proof}
With all ingredients set in place, we are now finally ready to combine Theorems~\ref{thm:bresar},~\ref{thm:Vizing2packing} and Corollary~\ref{cor:secondlowerbound} to prove Theorem~\ref{thm:main}.
\begin{proof}[Proof of Theorem~\ref{thm:main}]
Let $G$ and $H$ be any given graphs. Using Observation~\ref{obs:obvious}, we find $$1\le \rho(G)\le \frac{\rho^{\{2\}}(G)}{2}\le \gamma(G), 1\le \rho(H)\le \frac{\rho^{\{2\}}(H)}{2}\le \gamma(H).$$ Let us now define $$x_1:=\frac{\rho(G)}{\gamma(G)}, x_2:=\frac{\rho^{\{2\}}(G)}{2\gamma(G)}, y_1:=\frac{\rho(H)}{\gamma(H)}, y_2:=\frac{\rho^{\{2\}}(H)}{2\gamma(H)}.$$
Observe that by the above, we have that $0<x_1\le x_2\le 1$ and $0<y_1\le y_2\le 1$. We now give several lower bounds on the ratio $\frac{\gamma(G\square H)}{\gamma(G)\gamma(H)}$. 

First, note that by Theorem~\ref{thm:bresar} we get 
$$\frac{\gamma(G\square H)}{\gamma(G)\gamma(H)}\ge \max\left\{\frac{2-x_1}{3},\frac{2-y_1}{3}\right\}.$$

Second, pause to verify that by Theorem~\ref{thm:Vizing2packing} we have
$$\frac{\gamma(G\square H)}{\gamma(G)\gamma(H)}\ge \max\left\{\frac{3}{4}-\frac{x_2}{2},\frac{3}{4}-\frac{y_2}{2}\right\}.$$

Third, from Corollary~\ref{cor:secondlowerbound}, we get that
$$\frac{\gamma(G\square H)}{\gamma(G)\gamma(H)}\ge \max\left\{x_1+y_2(1-x_1),y_1+x_2(1-y_1)\right\}.$$
Summarizing, to establish the claim of the theorem, it thus suffices to show that 
$$\max\left\{\frac{2-x_1}{3},\frac{2-y_1}{3},\frac{3}{4}-\frac{x_2}{2},\frac{3}{4}-\frac{y_2}{2},x_1+y_2(1-x_1),y_1+x_2(1-y_1)\right\}\ge c=\frac{5+\sqrt{73}}{24}$$ for all real numbers $x_1,x_2,y_1,y_2\in [0,1]$. 

If $x_1<\frac{11-\sqrt{73}}{8}$ or $y_1<\frac{11-\sqrt{73}}{8}$, then one of the first two terms in the above maximum exceeds $c$, and so we are done in this case. 

Similarly, if $x_2<\frac{13-\sqrt{73}}{12}$ or $y_2<\frac{13-\sqrt{73}}{12}$, then one of the second two terms in the above maximum exceeds $c$, and so we are done in this case, too.

Hence, it remains to consider the case that $x_1, y_1\ge \frac{11-\sqrt{73}}{8}$ and $x_2,y_2\ge \frac{13-\sqrt{73}}{12}$. Pause to note that the last two terms of the above maximum are both monotonically increasing in each of the four variables when they range within $[0,1]$. Hence, we then find that the above maximum is lower-bounded by
$$\frac{11-\sqrt{73}}{8}+\frac{13-\sqrt{73}}{12}\cdot \left(1-\frac{11-\sqrt{73}}{8}\right)$$
$$=\frac{5+\sqrt{73}}{24}=c,$$
as desired. This concludes the proof of the theorem.
\end{proof}
\fontsize{11pt}{12pt}
\selectfont
	
\hypersetup{linkcolor={red!70!black}}
\setlength{\parskip}{2pt plus 0.3ex minus 0.3ex}

\bibliographystyle{auxfile.bst}
\bibliography{bib.bib}

\end{document}